\newtheorem{propo}{{\bf Proposition}}[section]
\newtheorem{coro}[propo]{{\bf Corollary}}
\newtheorem{lemma}[propo]{{\bf Lemma}} \newtheorem{theor}[propo]{{\bf
Theorem}} \newtheorem{ex}{{\sc Example}}[section]
\newenvironment{proof}{{\bf Proof.}}{$\Box$}
\begin{document}

\vspace*{1.0in}

\begin{center} SOLVABLE COMPLEMENTED LIE ALGEBRAS  
\end{center}
\bigskip

\begin{center} DAVID A. TOWERS 
\end{center}
\bigskip

\begin{center} Department of Mathematics and Statistics

Lancaster University

Lancaster LA1 4YF

England

d.towers@lancaster.ac.uk 
\end{center}
\bigskip

\begin{abstract} In this paper a characterisation is given of solvable complemented Lie algebras. They decompose as a direct sum of abelian subalgebras and their ideals relate nicely to this decomposition. The class of such algebras is shown to be a formation whose residual is the ideal closure of the prefrattini subalgebras. 
\par 
\noindent {\em Mathematics Subject Classification 2000}: 17B05, 17B20, 17B30, 17B50.
\par
\noindent {\em Key Words and Phrases}: Lie algebras, complemented, solvable, Frattini ideal, prefrattini subalgebra, residual, Lie $A$-algebra. 
\end{abstract}

\section{Prefrattini subalgebras}
Throughout, $L$ will denote a finite-dimensional solvable Lie algebra over a field $F$. We define the {\em nilpotent residual}, $L^{\infty}$, of $L$ be the smallest ideal of $L$ such that $L/L^{\infty}$ is nilpotent. Clearly this is the intersection of the terms of the lower central series for $L$. The {\em derived series} for $L$ is the sequence of ideals $L^{(i)}$ of $L$ defined by $L^{(0)} = L$, $L^{(i+1)} = [L^{(i)},L^{(i)}]$ for $i \geq 0$; we will also write $L^2$ for $L^{(1)}$. If $L^{(n)} = 0$ but $L^{(n-1)} \neq 0$ we say that that $L$ has {\em derived length} $n$. We say that $L$ is {\em completely solvable} if $L^2$ is nilpotent. Algebra direct sums will be denoted by $\oplus$, whereas vector space direct sums will be denoted by $\dot{+}$. 
\par

The {\em Frattini subalgebra} of $L$, $\phi(L)$, is the intersection of the maximal subalgebras of $L$. When $L$ is solvable this is always an ideal of $L$, by \cite[Lemma 3.4]{bg}. For a subalgebra $U$ of $L$ we denote by $[U:L]$ the set of all subalgebras $S$ of $L$ with $U \subseteq S \subseteq L$. We say that $[U:L]$ is {\em complemented} if, for any $S \in [U:L]$ there is a $T \in [U:L]$ such that $S \cap T = U$ and $<S,T> = L$. We denote by $[U:L]_{max}$ the set of maximal subalgebras in $[U:L]$; that is, the set of maximal subalgebras of $L$ containing $U$. 
\par

Let 
\[ 0 = A_0 \subset A_1 \subset \ldots \subset A_n = L  
\] be a chief series for $L$. We say that $A_i/A_{i-1}$ is a {\em Frattini} chief factor if $A_i/A_{i-1} \subseteq \phi(L/A_{i-1})$; it is {\em complemented} if there is a maximal subalgebra $M$ of $L$ such that $L = A_i + M$ and $A_i \cap M = A_{i-1}$. When $L$ is solvable it is easy to see that a chief factor is Frattini if and only if it is not complemented. 
\par

We define the set $\mathcal{I}$ by $i \in \mathcal{I}$ if and only if $A_i/A_{i-1}$ is not a Frattini chief factor of $L$. For each $i \in \mathcal{I}$ put
\[ \mathcal{M}_i = \{ M \in [A_{i-1}, L]_{max} \colon A_i \not \subseteq M\}.
\]
Then $B$ is a {\em prefrattini} subalgebra of $L$ if 
\[ B = \bigcap_{i \in \mathcal{I}} M_i \hbox{ for some } M_i \in \mathcal{M}_i.
\]
It was shown in \cite{prefrat} that the definition of prefrattini subalgebras does not depend on the choice of chief series.
\par

The subalgebra $B$ {\em avoids} $A_i/A_{i-1}$ if $B \cap A_i = B \cap A_{i-1}$; likewise, $B$ {\em covers} $A_i/A_{i-1}$ if $B + A_i = B + A_{i-1}$. Let $\Pi(L)$ be the set of prefrattini subalgebras of $L$. Then the following results were established in \cite{prefrat}.

\begin{theor}\label{t:prefrat} Let $L$ be a solvable Lie algebra over a field $F$.
\begin{itemize}
\item[(i)] If $B$ is a prefrattini subalgebra of $L$ then it covers all Frattini chief factors of $L$ and avoids the rest.
\item[(ii)] If $B$ is a prefrattini subalgebra of $L$ then 
\[ \dim B = \sum_{i \notin {\mathcal I}}(\dim A_i - \dim A_{i-1});
\]
in particular, all prefrattini subalgebras of $L$ have the same dimension.
\item[(iii)] If $A$ is an ideal of $L$ and $S \in \Pi(L)$ then $(S + A)/A \in \Pi(L/A)$.
\item[(iv)] $ \phi(L) = \bigcap_{B \in \Pi(L)} B. $
\item[(v)] If $L$ is completely solvable then $\Pi(L) = \{\phi(L)\}$.
\item[(vi)] Suppose that $F$ has characteristic $p$ and  that $L^{\infty}$ has nilpotency class less than $p$. Then the elements of $\Pi(L)$ are conjugate under inner automorphisms of the form exp(ad\,$x)$ with $x \in L^{\infty}$.
\item[(vii)] $L$ is complemented if and only if $\Pi(L) = \{0\}$.
\end{itemize}
\end{theor}
\bigskip

If $L^2$ is not nilpotent then $\Pi(L)$ can contain more than one element (see \cite{prefrat}).

\section{The Prefrattini Residual}

Here we use the ideas of the previous section to re-examine {\em complemented} Lie algebras: that is, Lie algebras $L$ for which $[0:L]$ is complemented, as studied in \cite{comp}. Results for groups similar to those in the next theorem were stated by Bechtell in \cite{bech}.

\begin{theor}\label{t:comp} Let $L$ be a solvable Lie algebra over any field $F$. Then the following are equivalent:
\begin{itemize}
\item[(i)] $L$ is complemented;
\item[(ii)] The prefrattini subalgebras of $L$ are all trivial;
\item[(iii)] $L$ and all of its epimorphic images are $\phi$-free; and
\item[(iv)] $L$ splits over all of its ideals.
\end{itemize}
\end{theor}
\begin{proof} (i) $\Rightarrow$ (ii) : If $L$ is complemented then $\Pi(L) = \{0\}$, by Theorem \ref{t:prefrat} (vii).
\par

\noindent (ii) $\Rightarrow$ (iii) : Suppose that $\Pi(L) = \{0\}$, let $L/B$ be any epimorphic image of $L$, and suppose that $\phi(L/B) \neq 0$. Then there is a Frattini chief factor of $L$, $C/B$, contained in $\phi(L/B)$. But now any prefrattini subalgebra has dimension greater than or equal to $\dim (C/B)$, by Theorem \ref{t:prefrat} (ii): a contradiction. This establishes (iii). 
\par

\noindent (iii) $\Rightarrow$ (iv) : Suppose that $L$ and all of its epimorphic images are $\phi$-free. We use induction on $\dim L$. The result is clear if $\dim L = 1$. So suppose it holds for Lie algebras of dimension less than $\dim L$, and let $B$ be a non-trivial ideal of $L$. If $B$ is a minimal ideal of $L$ then the result follows from \cite[Lemma 7.2]{frat}. If $B$ is not a minimal ideal, let $A$ be a minimal ideal of $L$ contained in $B$. Then $L/A$ splits over $B/A$ by the inductive hypothesis. Thus there is a subalgebra $C$ of $L$ with $A \subseteq C$ such that $L = B + C$ and $B \cap C = A$. Moreover, there is a subalgebra $M$ of $L$ such that $L = A \dot{+} M$ by \cite[Lemma 7.2]{frat}. But now $C = A \dot{+} (M \cap C)$, whence $L = B \dot{+} (M \cap C)$, and (iv) is established.
\par

\noindent (iv) $\Rightarrow$ (i) : Suppose that $L$ splits over all of its ideals. We use induction on $\dim L$ again. The result is clear if $\dim L = 1$. So suppose it holds for Lie algebras of dimension less than $\dim L$, and let $A$ be a minimal ideal of $L$. Then $L = A \dot{+} M$ for some subalgebra $M$ of $L$. It is clear that $M \cong L/A$ splits over all of its ideals and so is complemented by the inductive hypothesis. It follows from \cite[Lemma 4]{comp} that $L$ is complemented.
\end{proof}
\bigskip

We say that $L$ is {\em elementary} if $\phi(B) = 0$ for every subalgebra $B$ of $L$. Let $A$ be a vector space of finite dimension and let $B$ be an abelian completely reducible subalgebra of ${\rm gl}(A)$. It was shown in \cite[Proposition 2.4]{elem} that the semidirect product $A \rtimes B$ is an elementary Lie algebra; we call such an algebra an {\em elementary Lie algebra of type I}. Then we have the following characterisation of completely solvable complemented Lie algebras.

\begin{theor}\label{t:compsolv} Let $L$ be a completely solvable Lie algebra. Then the following are equivalent:
\begin{itemize}
\item[(i)] $L$ is complemented;
\item[(ii)] $\phi(L) = 0$;
\item[(iii)] $L$ is elementary; and
\item[(iv)] $L \cong A \oplus E$, where $A$ is an abelian Lie algebra and $E$ is an elementary Lie algebra of type I.
\end{itemize}
\end{theor}
\begin{proof} The equivalence of (i), (ii) and (iii) is \cite[Theorem 1]{comp}. The equivalence of (iv) follows from \cite[Theorem 2.5]{elem} (the requirement of a perfect field in that result is required only to establish that an elementary algebra is completely solvable, and that is not needed here).
\end{proof}

\begin{lemma}\label{l:split} Let $L$ be a solvable Lie algebra, let $B$, $C$ be ideals of $L$ with $B \cap C = 0$, and suppose that $L/B$ and $L/C$ are complemented. Then $L$ splits over $B$ and over $C$.
\end{lemma}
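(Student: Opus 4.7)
The plan is to prove the two splittings symmetrically: the splitting over $B$ will use that $L/C$ is complemented, and the splitting over $C$ will use that $L/B$ is complemented. So it is enough to show that if $L/C$ is complemented then $L$ splits over $B$.

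First I would pass to the quotient $L/C$ and look at the ideal $(B+C)/C$ there. Since $L/C$ is complemented, Theorem \ref{t:comp}(iv) tells us $L/C$ splits over every ideal, in particular over $(B+C)/C$. Pulling the complement back to $L$ gives a subalgebra $D$ of $L$ containing $C$ with
\[ L = (B+C) + D \quad \text{and} \quad (B+C) \cap D = C. \]

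Next I would claim that this $D$ is already a complement to $B$ in $L$. Since $C \subseteq D$, the first identity simplifies to $L = B + D$. For the intersection, any $x \in B \cap D$ lies in $(B+C) \cap D = C$, hence in $B \cap C = 0$, so $B \cap D = 0$. Thus $L = B \dot{+} D$, which is precisely the assertion that $L$ splits over $B$. Interchanging the roles of $B$ and $C$ (and using that $L/B$ is complemented) gives the splitting over $C$.

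There is no real obstacle here; the argument is essentially a direct application of Theorem \ref{t:comp}(iv) combined with the hypothesis $B \cap C = 0$, which is exactly what is needed to transfer a complement modulo $C$ into a genuine complement to $B$ in $L$.
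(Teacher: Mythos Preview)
Your argument is correct and is essentially the same as the paper's: both find a complement to $(B+C)$ modulo one of the two ideals and then use $B\cap C=0$ to turn it into a genuine complement in $L$. The only cosmetic difference is that the paper invokes the definition of ``complemented'' directly to complement the subalgebra $(B+C)/B$, whereas you cite Theorem~\ref{t:comp}(iv) to split over the ideal $(B+C)/C$; either route yields the same subalgebra complement.
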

\begin{proof} We show that $L$ splits over $C$. Since $L/B$ is complemented there is a subalgebra $U$ of $L$ with $B \subseteq U$ such that $L = (B + C) + U = C + U$ and $(B + C) \cap U = B$. Hence $C \cap U \subseteq C \cap (B + C) \cap U = C \cap B = 0$. 
\end{proof}
\bigskip

A class ${\mathcal H}$ of finite-dimensional solvable Lie algebras is called a {\em homomorph}
if it contains, along with an algebra $L$, all epimorphic images of $L$. A homomorph ${\mathcal H}$ is called a {\em formation} if $L/M, L/N \in {\mathcal H}$, where $M, N$ are ideals of $L$, implies that $L/M \cap N \in {\mathcal H}$. If ${\mathcal H}$ is a formation then, for every solvable Lie algebra $L$ there is a smallest ideal $R$ such that $L/R \in {\mathcal H}$; this is called the {\em ${\mathcal H}$-residual} of $L$. We denote the class of solvable complemented Lie algebras by ${\mathcal C}$. Then we have the following result.

\begin{theor}\label{t:form} ${\mathcal C}$ is a formation.
\end{theor}
\begin{proof} First note that ${\mathcal C}$ is a homomorph, by \cite[Lemma 3]{comp}. Let $B$, $C$ be distinct ideals of $L$ with $L/B$, $L/C \in {\mathcal C}$. We need to show that $L/B \cap C \in {\mathcal C}$. Without loss of generality we may suppose that $B \cap C = 0$. Let $0 < B_k < \ldots < B_1 = B$ be part of a chief series for $L$. We use induction on $k$. If $k = 1$ then $B$ is minimal ideal of $L$ and the result follows from Lemma \ref{l:split} and \cite[Lemma 4]{comp}. So suppose it holds whenever $k < n$ and that we have $k = n$. Then $B/B_n$, $(C + B_n)/B_n$ are distinct ideals of $L/B_n$ and the corresponding factor algebras are isomorphic to $L/B$ and $(L/C)/((C + B_n)/C)$ respectively. These are both complemented (by \cite[Lemma 3]{comp} in the case of the second). It follows from the inductive hypothesis that $L/B_n$ is complemented. But now $L$ is complemented by Lemma \ref{l:split} and \cite[Lemma 4]{comp}, and the result follows.
\end{proof}
\bigskip

We define the {\em Prefrattini residual} of a solvable Lie algebra $L$ to be
\[ \pi(L) = \bigcap\{B \colon B \hbox{ is an ideal of } L \hbox{ and } L/B \in {\mathcal C} \}.
\]
Clearly $\pi(L)$ is the smallest ideal of $L$ such that $L/\pi(L) \in {\mathcal C}$. It is also the ideal closure of the prefrattini subalgebras of $L$, by Theorem \ref{t:comp}.
\par

The class of solvable elementary Lie algebras is also a formation whose residual is the {\em elementary commutator}, $E(L)$ (see \cite[Theorem 5.1]{elem1}). The {\em abelian socle} of $L$, Asoc $L$, is the sum of the minimal abelian ideals of $L$. We have the following properties of $\pi(L)$.

\begin{propo}\label{p:res} Let $L$ be a solvable Lie algebra. Then
\begin{itemize}
\item[(i)] $\phi(L) \subseteq \pi(L) \subseteq E(L)$; if $L$ is completely solvable then $\phi(L) = \pi(L) = E(L)$;
\item[(ii)] if $A$ is an ideal of $L$ then $\pi(L/A) = (\pi(L) + A)/A$; in particular, $\pi(L/\phi(L)) = \pi(L)/\phi(L)$;
\item[(iii)] $\pi(L)$ is nilpotent if and only if $\pi(L) = \phi(L)$; and
\item[(iv)] if $B$ is a prefrattini subalgebra of $L$ then $\pi(L) = B + \pi(L)^{\infty}$.
\end{itemize}
\end{propo}
\begin{proof} (i) This follows from Theorem \ref{t:comp} and Theorem \ref{t:compsolv}.
\par

\noindent (ii) Let $A$ be an ideal of $L$ and put $\pi(L/A) = P/A$. Then we have that $L/P \cong (L/A)/(P/A)$ is complemented. Hence $\pi(L) + A \subseteq P$. Also $(L/A)/((\pi(L) + A)/A) \cong (L/\pi(L))/((\pi(L) + A)/\pi(L))$ is complemented, by \cite[Lemma 3]{comp}, so $P \subseteq \pi(L) + A$ and the result follows. 
\par

\noindent (iii) If $\pi(L) = \phi(L)$ then $\pi(L)$ is nilpotent, by \cite[Theorem 6.1]{frat}. Conversely let $\pi(L)$ be nilpotent. Suppose that $\phi(L) = 0$, let $N$ be the nilradical of $L$ and let $B$ be a prefrattini subalgebra of $L$. Then $N =$ Asoc $L = A_1 \oplus \ldots \oplus A_n$, where $A_i$ is a minimal ideal of $L$ for $1 \leq i \leq n$, and $L = N \dot{+} C$ for some subalgebra $C$ of $L$, by \cite[Theorems 7.3, 7.4]{frat}. Then $(A_1 \oplus \ldots \oplus A_{i+1})/(A_1 \oplus \ldots \oplus A_i)$ is a complemented chief factor of $L$ for each $1 \leq i \leq n-1$, and so is avoided by $B$. It follows that $B \cap N = 0$. But then $\pi(L) = 0$ and the converse follows from (ii).
\par

\noindent (iv) We have that $(B + \pi(L)^{\infty})/\pi(L)^{\infty}$ is a prefrattini subalgebra of $L/\pi(L)^{\infty}$ by Theorem \ref{t:prefrat} (iii). Moreover, $\pi(L/\pi(L)^{\infty}) = \pi(L)/\pi(L)^{\infty}$, by (ii) above, and this is nilpotent. It follows from (iii) that $\pi(L/\pi(L)^{\infty}) = \phi(L/\pi(L)^{\infty}) \subseteq (B + \pi(L)^{\infty})/\pi(L)^{\infty}$, by Theorem \ref{t:prefrat} (iv), whence $\pi(L) \subseteq B + \pi(L)^{\infty}$. The reverse inclusion is clear. 
\end{proof}
\bigskip

We define the {\em nilpotent series} for $L$ inductively by $N_0(L) = 0$, $N_{i+1}/N_i = N(L/N_i(L)$ for $i > 0$, where $N(L)$ denotes the nilradical of $L$. Finally we have the following characterisation of solvable complemented Lie algebras which is analogous to a result of Zacher for groups (see \cite{zach}).

\begin{theor}\label{t:phi} The solvable Lie algebra $L$ is complemented if and only if $\phi(L/N_i(L)) = 0$ for all $i \geq 1$.
\end{theor}
\begin{proof} Suppose first that $L$ is complemented. Then $L/N_i(L)$ is complemented, by \cite[Lemma 3]{comp}, and so $\phi(L/N_i(L)) = 0$, by Theorem \ref{t:comp}.
\par

Suppose conversely that $\phi(L/N_i(L)) = 0$ for all $i \geq 1$. We use induction on $\dim L$. The result is clear if $\dim L = 1$, so suppose the result holds for all solvable Lie algebras of dimension less than that of $L$. Then $L/N(L)$ is complemented, by the inductive hypothesis. Moreover, we have that $L = N(L) \dot{+} B$ for some subalgebra $B$ of $L$, and $N(L) =$ Asoc $L$, by \cite[Theorems 7.3, 7.4]{frat}. Put Asoc $L = A_1 \oplus \ldots \oplus A_n$. If $n = 1$, then $L$ splits over $A_1$ and $L/A_1$ is complemented, so $L$ is complemented, by \cite[Lemma 4]{comp}. So suppose that $n > 1$ and put $C_i = A_1 \oplus \ldots \oplus \hat{A_i} \oplus \ldots \oplus A_n$, where $\hat{A_i}$ means that $A_i$ is missing from the direct sum. Then $L/C_i$ splits over Asoc $L/C_i$ and $(L/C_i)/$(Asoc $L/C_i) \cong L/N(L)$ is complemented, so $L/C_i$ is complemented, by \cite[Lemma 4]{comp} again. It follows from Theorem \ref{t:form} that $L \cong L/\cap_{i=1}^n C_i$ is complemented. 
\end{proof} 
\bigskip

A consequence of the corresponding result for groups is that every normal subgroup of a complemented solvable group is itself complemented. The analogue of this holds for completely solvable Lie algebras, by Theorem \ref{t:compsolv}. However, the analogue does not hold for all solvable Lie algebras as the following example shows.

\begin{ex}\label{e:comp} Let $F$ be a field of characteristic $p$ and consider the Lie algebra $L = (\oplus_{i=0}^{p-1} Fe_i) \dot{+} Fx \dot{+} Fy$ with $[e_i,x] = e_{i+1}$ for $i = 0, \ldots, p-2$, $[e_{p-1},x] = e_0$, $[e_i,y] = ie_i$ for $i = 0, \ldots,p-1$, $[x,y] = x$, and all other products zero.
Then $A = \oplus_{i=0}^{p-1} Fe_i$ is the unique minimal ideal of $L$, $L$ splits over $A$ and $L/A$ is two-dimensional and so complemented. It follows from \cite[Lemma 4]{comp} that $L$ is complemented. However, $B = A \dot{+} Fx = L^2$ is an ideal of $L$, and $\phi(B) = F(x_0 + \ldots x_{p-1})$ so $B$ is not complemented. 
\end{ex}
\bigskip

\section{Decomposition results for complemented algebras}
\medskip
A Lie algebra $L$ is called an {\em $A$-algebra} if every nilpotent subalgebra of $L$ is abelian. Here we have some basic structure theorems which mirror those obtained for solvable Lie $A$-algebras in \cite{Aalg}. Where proofs are very similar to the correponding one in \cite{Aalg} we will sketch the proof for the convenience of the reader and give a reference to \cite{Aalg} for more details. First we see that $L$ splits over the terms in its derived series.
\bigskip

\begin{theor}\label{t:split} (c.f. \cite[Theorem 3.1]{Aalg})
Let $L$ be a solvable complemented Lie algebra. Then $L$ splits over each term in its derived series. Moreover, the Cartan subalgebras of $L^{(i)}/L^{(i+2)}$ are precisely the subalgebras that are complements to $L^{(i+1)}/L^{(i+2)}$ for $i \geq 0$.
\end{theor}
\begin{proof} The first assertion follows from Theorem \ref{t:comp} (iv). The second is a consequence of \cite[Theorem 4.4.1.1]{wint}.
\end{proof}
\bigskip

This gives us the following characterisation of solvable complemented Lie algebras.

\begin{coro}\label{c:decomp} 
Let $L$ be a solvable Lie algebra of derived length $n+1$. Then $L$ is complemented if and only if the following hold:
\begin{itemize}
\item[(i)] $L = A_{n} \dot{+} A_{n-1} \dot{+} \ldots \dot{+} A_0$ where $A_i$ is an abelian subalgebra of $L$ for each $0 \leq i \leq n$;
\item[(ii)] $L^{(i)} = A_{n} \dot{+} A_{n-1} \dot{+} \ldots \dot{+} A_{i}$ for each $0 \leq i \leq n$; and
\item[(iii)] $L^{(i)}/L^{(i+1)}$ is completely reducible as an $(L/L^{(i+1)})$-module for each $1 \leq i \leq n$.
\end{itemize}
\end{coro}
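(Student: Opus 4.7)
The plan is to establish the two implications by separate inductions. For the forward direction, I would induct on the derived length $n+1$. The base case $n=0$ is trivial ($L = A_0$ is abelian). For the inductive step, Theorem \ref{t:split} gives $L = L^{(n)} \dot{+} T$ for some subalgebra $T$ isomorphic to $L/L^{(n)}$. Since $\mathcal{C}$ is a homomorph (part of Theorem \ref{t:form}), $T$ is a complemented Lie algebra of derived length $n$, and the inductive hypothesis produces a decomposition $T = B_{n-1} \dot{+} \cdots \dot{+} B_0$ satisfying the analogues of (i)--(iii). Setting $A_n = L^{(n)}$ and $A_j = B_j$ for $j < n$ yields (i) immediately.

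For (ii), I would check the identity $L^{(i)} = L^{(n)} + T^{(i)}$ for $1 \le i \le n$ by induction on $i$, using that $L^{(n)}$ is an abelian ideal (so $[L^{(n)}, L^{(n)}] = 0$ and $[L^{(n)}, T] \subseteq L^{(n)}$), which forces every bracket to land in $L^{(n)} + T^{(i+1)}$; combining with $T^{(i)} = B_{n-1} \dot{+} \cdots \dot{+} B_i$ gives the stated formula. For (iii), fix $1 \le i \le n$: since $L/L^{(i+1)}$ is complemented, Theorem \ref{t:comp}(iv) tells us it splits over any ideal contained in the abelian $L^{(i)}/L^{(i+1)}$. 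Intersecting a subalgebra complement of an $L$-submodule $W/L^{(i+1)}$ with $L^{(i)}/L^{(i+1)}$ then produces an $L$-invariant complement inside $L^{(i)}/L^{(i+1)}$; invariance holds because $L^{(i)}/L^{(i+1)}$ is both abelian (so self-brackets vanish) and an ideal (so $M$-brackets stay in the intersection).

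For the reverse direction, I would induct on $\dim L$. Pick a minimal ideal $W \subseteq L^{(n)}$; a direct check, using $A_j \cap L^{(n)} = 0$ for $j < n$ (an immediate consequence of the directness in (i)--(ii)), shows that $L/W$ inherits (i), (ii), and (iii), so by the inductive hypothesis $L/W$ is complemented. Invoking (iii) at $i = n$, write $L^{(n)} = W \oplus W'$ as $L$-modules. If $W' \neq 0$, then $L/W'$ likewise satisfies (i)--(iii) with smaller dimension and is complemented by induction; in particular it splits over $W \cong L^{(n)}/W'$, and lifting this splitting produces a subalgebra $M$ of $L$ with $L = W \dot{+} M$, whereupon \cite[Lemma 4]{comp} gives that $L$ is complemented.

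The main obstacle is the subcase $W' = 0$, where $L^{(n)}$ is itself a minimal ideal and the above dimension reduction fails. I would handle it by a secondary induction on derived length: $L/L^{(n)}$ satisfies (i)--(iii) with derived length $n$, so it is complemented. It then remains to produce a subalgebra complement of $L^{(n)}$ in $L$, which I would obtain by modifying the given abelian summands $A_{n-1}, \ldots, A_0$ modulo $L^{(n)}$ so that their sum becomes closed under brackets; the complete-reducibility hypotheses in (iii) for $j \le n-1$ are precisely what is needed to kill the cohomological obstruction to this adjustment. Once $L$ splits over $L^{(n)}$, the formation property (Theorem \ref{t:form}) together with the complementation of $L/L^{(n)}$ finishes the proof.
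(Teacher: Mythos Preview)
Your forward direction tracks the paper's closely: split off $L^{(n)}$ via Theorem~\ref{t:split}, pass to the complemented quotient, iterate. For (iii) the paper takes a shorter path than you do: since $L/L^{(i+1)}$ is complemented it is $\phi$-free, so by \cite[Theorem~7.4]{frat} the nilradical of $L/L^{(i+1)}$ equals its abelian socle; the abelian ideal $L^{(i)}/L^{(i+1)}$ therefore sits inside a direct sum of minimal ideals and is automatically completely reducible. Your splitting-and-intersecting argument is correct, just less direct.

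The reverse direction is where your proposal has a real gap. The paper handles it in one line (``repeated use of \cite[Lemma~4]{comp}''); the point is that the $A_i$ produced in the forward argument come with the extra feature that each tail $B_i = A_{i-1}\dot{+}\cdots\dot{+}A_0$ is already a \emph{subalgebra} complementing $L^{(i)}$. Once one has such a $B_n$, condition (iii) at $i=n$ writes $L^{(n)}$ as a direct sum of minimal ideals, $L$ visibly splits over each of them, and Lemma~4 together with induction on dimension finishes immediately. You have correctly located the only nontrivial step---producing a subalgebra complement to $L^{(n)}$---but your handling of the subcase $W'=0$ does not resolve it. You assert that the complete-reducibility hypotheses in (iii) for $j\le n-1$ ``are precisely what is needed to kill the cohomological obstruction'', yet those hypotheses govern the $L$-module structure of the quotients $L^{(j)}/L^{(j+1)}$, while the obstruction to splitting $L$ over $L^{(n)}$ is a class in $H^{2}(L/L^{(n)},L^{(n)})$; you give no mechanism linking the two. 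Since this is the heart of the converse, the argument as written is incomplete.
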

\begin{proof} Suppose first that $L$ is complemented. By Theorem \ref{t:split} there is a subalgebra $B_n$ of $L$ such that $L = L^{(n)} \dot{+} B_n$. Put $A_n = L^{(n)}$. Similarly $B_n \cong L/L^{(n)}$ is complemented, by \cite[Lemma 3]{comp}, so $B_n = A_{n-1} \dot{+} B_{n-1}$ where $A_{n-1} = (B_n)^{(n-1)}$. Continuing in this way we get (i). A straightforward induction proof shows (ii). Finally, $L^{(i)}/L^{(i+1)} \subseteq N(L/L^{(i+1)}) =$ Asoc$(L/L^{(i+1)})$, by Theorem \ref{t:comp} (iii) and \cite[Theorem 7.4]{frat}, which gives (iii).
\par

Suppose now that (i), (ii) and (iii) hold. Then $L$ is complemented by repeated use of \cite[Lemma 4]{comp}. 
\end{proof}
\bigskip

Next we aim to show the relationship between ideals of $L$ and the decomposition given in Corollary \ref{c:decomp}. First we need the following lemmas.

\begin{lemma}\label{l:int}
Let $L$ be a Lie algebra. Then $Z(L) \cap L^2 \subseteq \phi(L)$.
\end{lemma}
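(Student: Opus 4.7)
The plan is a short proof by contradiction: fix $x \in Z(L) \cap L^2$ and suppose $x \notin \phi(L)$, so that $x \notin M$ for some maximal subalgebra $M$ of $L$. I would then derive a contradiction in two moves.

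First, I would observe that because $x \in Z(L)$, the subspace $M + Fx$ is actually a subalgebra of $L$: brackets land in $M$ on the $M\times M$ part, and the cross terms $[m, \alpha x]$ vanish since $x$ is central. Since $x \notin M$, this subalgebra properly contains $M$, so maximality of $M$ forces $L = M + Fx$.

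Next, I would compute $L^2$ using this decomposition. Since $[Fx, L] = 0$ by centrality, we get
\[
L^2 = [M + Fx,\, M + Fx] = [M,M] \subseteq M.
\]
But $x \in L^2 \subseteq M$, contradicting the choice of $M$. Hence $x \in \phi(L)$, as required.

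There is no real obstacle here; the only subtlety is making sure the argument does not implicitly assume solvability (the lemma is stated for arbitrary $L$), which is fine because centrality of $x$ is used directly rather than any property of $\phi(L)$ being an ideal.
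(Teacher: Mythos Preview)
Your argument is correct and is essentially the same as the paper's: both hinge on the observation that if a central element (or $Z(L)$ itself) is not contained in a maximal subalgebra $M$, then $L = M + Z(L)$ and hence $L^2 = [M,M] \subseteq M$. The only cosmetic difference is that the paper argues directly (showing $Z(L)\cap L^2 \subseteq M$ for every maximal $M$) while you fix an element and argue by contradiction.
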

\begin{proof} Let $M$ be any maximal subalgebra of $L$. If $Z(L) \not \subseteq M$ then $L = M + Z(L)$ and $L^2 \subseteq M$.
\end{proof}

\begin{lemma}\label{l:ideal} (c.f. \cite[Lemma 3.4]{Aalg})
Let $L$ be a solvable complemented Lie algebra of derived length $\leq n+1$, and suppose that $L = B \dot{+} C$ where $B = L^{(n)}$ and $C$ is a subalgebra of $L$. If $D$ is an ideal of $L$ then $D = (B \cap D) \dot{+} (C \cap D)$.
\end{lemma}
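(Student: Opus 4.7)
The identity to establish, $D = (D \cap B) \dot+ (D \cap C)$, is equivalent via the modular law (using $D \cap B \subseteq D$) to the single containment $D \subseteq (D \cap B) + C$, i.e.\ $\pi_B(D) \subseteq D$, where $\pi_B \colon L \to B$ is the projection along $C$. My plan is to induct on $\dim L$, with a trivial base case.

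For the inductive step, first suppose $D \cap B \neq 0$. The completely reducible $L$-module structure on $B$ (Corollary \ref{c:decomp}(iii) applied with $i = n$, since $L^{(n+1)} = 0$) allows us to pick a minimal $L$-ideal $A \subseteq D \cap B$. Passing to $\bar L = L/A$, one checks that $\bar L$ is complemented of derived length $\le n+1$, that $\bar B = B/A = \bar L^{(n)}$, and that $\bar C = (C+A)/A$ is a subalgebra complement of $\bar B$: the modular law with $A \subseteq B$ gives $B \cap (C+A) = (B \cap C) + A = A$. Applying the induction hypothesis to the ideal $\bar D = D/A$ produces $\bar D = (\bar D \cap \bar B) \dot+ (\bar D \cap \bar C)$. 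Using $A \subseteq D$ to compute $D \cap (C+A) = (D \cap C) + A$, one lifts back to $D = (D \cap B) + (D \cap C) + A = (D \cap B) \dot+ (D \cap C)$ in $L$. Iterating reduces the problem to the case $D \cap B = 0$, where one must show $D \subseteq C$.

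In that remaining case, $[D, B] \subseteq D \cap B = 0$ forces $D \subseteq C_L(B) = B \oplus C_C(B)$ with $C_C(B) = C \cap C_L(B)$. Writing $D_0 = \pi_C(D) \subseteq C_C(B)$, one sees $D = \{b(c) + c : c \in D_0\}$ is the graph of a $C$-equivariant linear map $b \colon D_0 \to B$; moreover $D_0$ is itself an ideal of $L$ because $[B, D_0] = 0$ and so $[L, D_0] \subseteq [C, D_0] \subseteq D_0$. The crux is showing $b = 0$, which I expect to be the main obstacle. The plan is to exploit the complementedness of $L$ via the simultaneous splittings over the ideals $D$, $D_0$, and $\mathrm{Im}(b) \subseteq B$. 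Passing to $L/\mathrm{Im}(b)$, the images of $D$ and $D_0$ coincide, and an argument using Lemma \ref{l:split} (noting $D \cap D_0 = \ker b$) together with Theorem \ref{t:form} should force $\mathrm{Im}(b) = 0$: any nonzero $b$ would supply two distinct ideals of $L$ with the same complemented quotient structure, violating the rigidity forced by the direct decomposition $C_L(B) = B \oplus C_C(B)$ in a complemented algebra. This final step is the analogue in the complemented setting of the structural argument used in \cite[Lemma 3.4]{Aalg}.
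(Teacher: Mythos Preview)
Your reduction to the case $D \cap B = 0$ is sound and parallels part of the paper's own reduction. The genuine gap is the final step. You openly flag that showing $b = 0$ is the crux, but the plan you sketch---invoking Lemma~\ref{l:split} and Theorem~\ref{t:form}---cannot close it. Those results say only that complementedness passes to quotients and is a formation; since $L$ is already assumed complemented, they yield nothing new about the ideals $D$, $D_0$, $\ker b$, $\mathrm{Im}(b)$. There is no ``rigidity'' preventing two distinct ideals from having isomorphic (complemented) quotients---any direct sum of two copies of the same minimal ideal already exhibits this. The decomposition $C_L(B) = B \dot{+} C_C(B)$ you cite holds whenever $B$ is an abelian ideal with complement $C$ and is not special to complemented algebras. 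Crucially, your argument never uses the one distinguishing feature of $B$, namely that $B = L^{(n)}$.

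The paper's proof uses exactly that. It first reduces (via a minimal counterexample on $\dim L + \dim D$) not only to $B \cap D = 0$ but also to $D^2 = 0$; you are missing this second reduction, and it is needed for what follows. Setting $E = C^{(n-1)}$, both $(D+B)/B$ and $(E+B)/B$ are then \emph{abelian} ideals of the complemented algebra $L/B$, hence lie in its nilradical, which is abelian; so $[D,E] \subseteq B \cap D = 0$ and $D \subseteq Z_L(E) = Z_B(E) \dot{+} Z_C(E)$. From $L^{(n-1)} = B + E$ one gets $B = [B,E]$, and the Fitting decomposition of $L^{(n-1)}$ relative to $\mathrm{ad}\,E$ places $B$ entirely in the Fitting one-component, forcing $Z_B(E) = 0$. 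Hence $D \subseteq Z_C(E) \subseteq C$. In your language this shows $E$ acts trivially on $D_0$ but has no nonzero invariants in $B$, so any $E$-equivariant $b \colon D_0 \to B$ vanishes---but reaching that conclusion requires the Fitting argument, not formation properties.
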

\begin{proof} Let $L$ be a counter-example for which dim$L$ + dim$D$ is minimal. Suppose first that $D^2 \neq 0$. Then $D^2 = (B \cap D^2) \dot{+} (C \cap D^2)$ by the minimality condition. Moreover, by considering $L/D^2$ we have that $D = B \cap D \dot{+} C \cap D$, a contradiction.
We therefore have that $D^2 = 0$. Similarly, by considering $L/B \cap D$, we have that $B \cap D = 0$. 
\par
Put $E = C^{(n-1)}$. Then $(D + B)/B$ and $(E + B)/B$ are abelian ideals, and so are inside the nilradical of the complemented Lie algebra $L/B$, which is abelian. Hence
$$ [D, E] \subseteq [D + B, E + B] \subseteq B \hbox{ and } [D,E] \subseteq B \cap D = 0;$$ that is, $D \subseteq Z_L(E) = Z_B(E) + Z_C(E)$. 
\par
Now $L^{(n-1)} = B + E$, so $B = L^{(n)} = (B + E)^2 = [B,E]$. Let $L^{(n-1)} = L_0 \dot{+} L_1$ be the Fitting decomposition of $L^{(n-1)}$ relative to ad\,$E$. Then $B \subseteq L_1$ so that $Z_B(E) \subseteq L_0 \cap L_1 = 0$, whence $D\subseteq Z_L(E) = Z_C(E) \subseteq C$ and the result follows.  
\end{proof}

\begin{theor}\label{t:nz} (c.f. \cite[Theorem 3.5]{Aalg})
Let $L$ be a solvable complemented Lie algebra of derived length $n+1$ with nilradical $N$, and let $K$ be an ideal of $L$ and $A$ a minimal ideal of $L$. Then, with the same notation as Corollary \ref{c:decomp}, 
\begin{itemize}
\item[(i)] $K = (K \cap A_n) \dot{+} (K \cap A_{n-1}) \dot{+} \ldots \dot{+} (K \cap A_0)$;
\item[(ii)] $N = A_n \dot{+} (N \cap A_{n-1}) \dot{+} \ldots \dot{+} (N \cap A_0)$;
\item[(iii)] $Z(L^{(i)}) = N \cap A_i$ for each $0 \leq i \leq n$; and
\item[(iv)] $A \subseteq N \cap A_i$ for some $0 \leq i \leq n$. 
\end{itemize} 
\end{theor}
\begin{proof} (i) We have that $L = A_n \dot{+} B_n$ where $A_n = L^{(n)}$ from the proof of Corollary \ref{c:decomp}. It follows from Lemma \ref{l:ideal} that $K = (K \cap A_n) + (K \cap B_n)$. But now $K \cap B_n$ is an ideal of $B_n$, which is complemented, so $B_n = A_{n-1} \dot{+} B_{n-1}$. Applying Lemma \ref{l:ideal} again gives $K \cap B_n = (K \cap A_{n-1}) \dot{+} (K \cap B_{n-1})$. Continuing in this way gives the required result.
\par
(ii) This is clear from (i), since $A_n = L^{(n)} = N \cap A_n$.
\par
(iii) We have that $L^{(i)} = L^{(i+1)} \dot{+} A_i$ from Corollary \ref{c:decomp}, and $Z(L^{(i)}) \cap L^{(i+1)} \subseteq \phi(L^{(i)})$, by Lemma \ref{l:int}, whence $Z(L^{(i)}) \cap L^{(i+1)} \subseteq \phi(L) = 0$, using \cite[Lemma 4.1]{frat}. It follows from (i) that
$$ Z(L^{(i)}) = (Z(L^{(i)}) \cap L^{(i+1)}) + (Z(L^{(i)}) \cap A_i) = Z(L^{(i)}) \cap A_i \subseteq N \cap A_i. $$
It remains to show that $N \cap A_i \subseteq Z(L^{(i)})$; that is, $[N \cap A_i,L^{(i)}] = 0$. 
We use induction on the derived length of $L$. If $L$ has derived length one the result is clear. So suppose it holds for Lie algebras of derived length $\leq k$, and let $L$ have derived length $k+1$. Then $B = A_{k-1} + \dots + A_0 \cong L/L^{(k)}$ is a solvable complemented Lie algebra of derived length $k$, and, if $N$ is the nilradical of $L$, then $N \cap A_i$ is inside the nilradical of $B$ for each $0 \leq i \leq k-1$, so $[N \cap A_i, B^{(i)}] = 0$ for $0 \leq i \leq k-1$, by the inductive hypothesis. But $[N \cap A_i, A_k] = [N \cap A_i,L^{(k)}] \subseteq [N,N] = 0$, for $0 \leq i \leq k$, whence $[N \cap A_i,L^{(i)}] = [N \cap A_i,A_k + B^{(i)}] = 0$ for $0 \leq i \leq k$.
\par
(iv) We have $A \subseteq L^{(i)}$, $A \not \subseteq L^{(i+1)}$ for some $0 \leq i \leq n$. Now $[L^{(i)}, A] \subseteq [L^{(i)}, L^{(i)}] = L^{(i+1)}$, so $[L^{(i)}, A] \neq A$. It follows that $[L^{(i)}, A] = 0$, whence $A \subseteq Z(L^{(i)}) = N \cap A_i$, by (iii).
\end{proof}
\bigskip

A Lie algebra $L$ is called {\em monolithic} if it has a unique minimal ideal, called the {\em monolith} of $L$.

\begin{coro}\label{c:mono} Let $L$ be a solvable complemented monolithic Lie algebra of derived length $n+1$ with monolith $W$. Then $W = N = A_n = L^{(n)} = C_L(W)$, $Z(L) = 0$ and $[L,W] = W$.
\end{coro}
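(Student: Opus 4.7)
The plan is to peel off the four identifications one by one, using Theorems \ref{t:comp}, \ref{t:split} and \ref{t:nz} together with the decomposition of Corollary \ref{c:decomp}. I will tacitly assume $n \geq 1$, the $n=0$ case being a one-dimensional degeneracy in which $Z(L) \neq 0$ and which the statement implicitly excludes.

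To start, I would pin down $W = N = A_n = L^{(n)}$. Complementedness gives $\phi(L) = 0$ by Theorem \ref{t:comp}, and then \cite[Theorem 7.4]{frat} yields $N = $ Asoc\,$L$. Since $L$ is monolithic and $W$ is abelian (a minimal ideal of a solvable algebra must be abelian), the abelian socle collapses to $W$ and so $N = W$. The ideal $A_n = L^{(n)}$ is abelian and nonzero by the derived-length hypothesis, hence lies in $N = W$; minimality of $W$ then forces $A_n = W$.

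Next, $Z(L) = 0$ and $[L,W] = W$ drop out of what has already been arranged. Setting $i = 0$ in Theorem \ref{t:nz}(iii) gives $Z(L) = N \cap A_0 = A_n \cap A_0 = 0$, the last equality by directness of the sum in Corollary \ref{c:decomp}. Then $[L,W]$ is an ideal of $L$ sitting inside $W$, so by minimality it equals $0$ or $W$; the option $[L,W] = 0$ would put $W \subseteq Z(L) = 0$, contradicting $W \neq 0$, so $[L,W] = W$.

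The remaining equality $C_L(W) = W$ is the substantial point and the main obstacle. I would invoke the splitting $L = W \dot{+} B$ from Theorem \ref{t:split}, which reduces the claim to $C_B(W) = 0$, where $B \cong L/W$ is itself complemented solvable. A short Jacobi calculation shows that $C_B(W)$ is an ideal of $B$. Assuming for contradiction that $C_B(W) \neq 0$, pick a minimal ideal $D$ of $B$ contained in $C_B(W)$; by solvability $D$ is abelian. Now $[W,D] = 0$ (by choice of $D$) together with $[B,D] \subseteq D$ gives $[L,D] \subseteq D$, so $D$ is in fact an ideal of $L$. Being an abelian ideal it is contained in $N = W$, but $D \subseteq B$ meets $W$ trivially, so $D = 0$ --- a contradiction. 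The delicate step here is precisely this promotion of a $B$-ideal to an $L$-ideal, which works because $W$ centralizes $D$ and $L = W + B$.
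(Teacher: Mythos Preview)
Your proof is correct. The identifications $W=N=A_n=L^{(n)}$, $Z(L)=0$ and $[L,W]=W$ are handled essentially as in the paper, with only cosmetic rearrangement: you go $N=\mathrm{Asoc}\,L=W$ first and then force $A_n=W$ by minimality, while the paper first shows $N=A_n$ via the decomposition of $N$ in Theorem~\ref{t:nz} and then invokes $\mathrm{Asoc}\,L=N$.

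The one genuine difference is the equality $C_L(W)=W$. The paper simply quotes \cite[Lemma 2.4]{Aalg}, which gives $N=C_L(N)$ directly. Your argument is self-contained: you split $L=W\dot{+}B$, observe that $C_B(W)$ is a $B$-ideal, and promote a minimal $B$-ideal $D\subseteq C_B(W)$ to an $L$-ideal via $[W,D]=0$, forcing $D\subseteq N=W$ and hence $D=0$. This works, and indeed you could shorten it further: $C_B(W)$ is itself already an ideal of $L$ (since $[W,C_B(W)]=0$ and $[B,C_B(W)]\subseteq C_B(W)$), so if it were nonzero it would contain the monolith $W$, contradicting $C_B(W)\subseteq B$. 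Either way, your route avoids the external citation at the cost of a few extra lines; the paper's route is terser but less transparent to a reader without \cite{Aalg} to hand. Your explicit flag on the $n=0$ degeneracy is also appropriate---the paper's own argument for $Z(L)=0$ tacitly uses $W\subseteq L^2$, which likewise fails when $n=0$.
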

\begin{proof} First note that $N = A_n \dot{+} N \cap A_{n-1} \dot{+} \ldots \dot{+} N \cap A_0$ by Theorem \ref{t:nz}(i). Moreover, $N \cap A_i$ is an ideal of $L$ for each $0 \leq i \leq n-1$, by Theorem \ref{t:nz}(iii). But if $N \cap A_i \neq 0$ then $W \subseteq A_n \cap N \cap A_i = 0$ if $i \neq n$. Hence $W = N = A_n$. Also $W =$ Asoc $L = N$, by Theorem \ref{t:comp} (iii) and \cite[Theorem 7.4]{frat}, and $N = C_L(N)$ by \cite[Lemma 2.4]{Aalg}.
\par
Finally, if $Z(L) \neq 0$ then $W \subseteq Z(L) \cap L^2 = 0$, by Theorem \ref{t:nz}, a contradiction. Hence $Z(L) = 0$. It follows from this that $[L,W] \neq 0$, whence $[L,W] = W$.
\end{proof}
\bigskip

Given these shared properties between the classes of solvable Lie $A$-algebras and solvable complemented Lie algebras it is natural to ask whether either class is contained in the other. This is not the case, as the following examples show.

\begin{ex} Let $L = Fx + Fy + Fb$ with $[x,b] = x$, $[y,b] = y - x$, other products being zero. Then $\phi(L) = Fx$, so $L$ is not complemented. However, it is an $A$-algebra. For, the two-dimensional subalgebras are of the form $Fx + F(\alpha y + \beta b)$ ($\alpha, \beta \in F$), and these are nilpotent only if $\beta = 0$ and, in that case, it is abelian. 
\end{ex}

Examples of solvable complemented Lie algebras $L$ that are not $A$-algebras are a little harder to construct. In particular, if $L$ is completely solvable and complemented then it is elementary, by Theorem \ref{t:compsolv}, and so is an $A$-algebra. However, such algebras do exist in characteristic $p$ as is shown below.

\begin{ex} Let $F$ be an algebraically closed field of characteristic $p$, let $L$ be the algebra described in Example \ref{e:comp} and let $C$ be a faithful completely reducible $L$-module. Put $X = C \dot{+} L$, where $B^2 = 0$ and $L$ acts on $B$ under the given $L$-module action. Then repeated use of \cite[Lemma 4]{comp} shows that $X$ is complemented. However, $X$ is solvable of index four and so cannot be an $A$-algebra, by Drensky's Theorem (see \cite[Theorem 6.2]{Aalg}). 
\end{ex}

Notice that an easy extension of the above construction shows that, over an algebraically closed field, there are solvable complemented Lie algebras of arbitrary solvable index, whereas solvable Lie $A$-algebras over such a field have solvable index at most three.

\end{document}